\def\pr{\mathsf P}
\begin{document}

\newcounter{lemma}
\newcommand{\lem}{\par \refstepcounter{lemma}
{\bf Lemma \arabic{lemma}.}}
\renewcommand{\thelemma}{\arabic{lemma}}

\newcounter{theorem}[section]
\newcommand{\thm}{\par \refstepcounter{theorem}
{\bf Theorem \arabic{theorem}.}}
\renewcommand{\thetheorem}{\arabic{theorem}}

\newcounter{example}
\newcommand{\example}{\par \refstepcounter{example}
{\bf Example \arabic{example}.}}
\renewcommand{\theexample}{\arabic{example}}

\newcommand{\proof}{{\it Proof.\,\,}}

\title{Peano phenomenon for\\ stochastic equations with local time.}
\author{Ivan H. Krykun\thanks{Department of Probability, Institute of Applied Mathematics and Mechanics,
National Academy of Science of Ukraine, Donetsk, Ukraine. \emph{e-mail :} iwanko@i.ua}}
\date{}

\maketitle

\begin{abstract}
In this paper we investigate weak convergence of measures generated
by solutions of stochastic equations with local time and small
diffusion while the last one tends to zero. In case the
correspondent ordinary differential equation has infinitely many
solutions we prove that limit measure concentrated with some weights
on its extreme solutions. Formulae for weights are obtained.

\textbf{Keywords:} Peano phenomenon, stochastic equation, local time, weak convergence of measures.

\textbf{AMS Subject Classification:} 60B10, 60H10, 60J55.
\end{abstract}

\section{Introduction.}

The problem of convergence of measures generated by solutions of
stochastic differential equations (SDEs) with small diffusion
$$x_\varepsilon(t)=x_0+\int_0^tb(x_\varepsilon(s))ds+\varepsilon w(t),$$
as $\varepsilon\rightarrow0$ to the measure concentrated on the
unique solution of the ordinary differential equation (ODE)
$$x'(t)=b(x(t)), x(0)=x_0,$$
was considered in several papers. Mention books by Stroock-Varadhan \cite{bookstrvar} and Wentzell-Freidlin
\cite{bookvenfr}.
Non-uniqueness case of solutions of ODE (Peano phenomenon) also was
considered. Mention such authors as Baldi \cite{paperbaldifr},
Baldi-Bafico \cite{paperbaldi}, Veretennikov \cite{paperver83},
Gradinaru-Herrmann-Roynette \cite{papergrad},
Buckdahn-Quincampoix-Ouknine \cite{paperbuck}, Krykun-Makhno
\cite{krmah13}.

In this paper we consider SDE with local time, continuous drift and small diffusion
\begin{equation}\label{sdrlch}
\xi_\varepsilon(t)=\beta L^{\xi_\varepsilon}(t,0)+ \int_0^t
b(\xi_\varepsilon(s))ds+\varepsilon\int_0^t
\sigma(\xi_\varepsilon(s))dw(s), \qquad t\in [0,1].
\end{equation}

With this equation we connect Cauchy problem for ODE
\begin{equation}\label{zdr}
y'(t)=b(y(t)),\hskip 10pt y(0)=0.
\end{equation}
and we prove weak convergence of measures generated by solutions of
(\ref{sdrlch}) to measure concentrated with some weights on extreme
solutions of (\ref{zdr}). Formulae for calculating their weights are
obtained.

We use the following notation: $I_A(x)$ be the indicator function of the
set $A$; $a^+=\max(a,0);$ $\mathbb{C}[0,\infty) -$ be a space of continuous functions $f(t),$ $t\in [0,\infty),$ with the metric of uniform convergence on compact subsets of $[0,\infty)$:
$$\rho (f,g)=\sum^\infty_{N=1}\frac{1}{2^N}\frac{\sup_{t\in [0,N]}|f(t)-g(t)|}{1+\sup_{t\in [0,N]}|f(t)-g(t)|}.$$
Let $\mathfrak B$ be $\sigma -$ algebra of Borel sets of this space.
Denote $f(x)\sim g(x)$ as $x\rightarrow x_0$, if equality
$$\lim_{x\rightarrow x_0}\frac{f(x)}{g(x)}=1$$
takes place.

Let $(\Omega, \mathfrak{F}, \mathfrak{F}_{t}, \pr)$ be the probability space with a flow
of $\sigma$-algebras $ \mathfrak{F}_{t}$,   $t\geq0$,  $(w(t), \mathfrak{F}_{t} )$ be standard one-dimensional Wiener process.

The function $\text{sgn} (x)$ is defined as:
$$
\text{sgn}( x)=\left \{ \aligned
1, \hskip 5 pt & \text{if} \hskip 5 pt x>0,\\
0, \hskip 5 pt & \text{if} \hskip 5 pt x=0,\\
-1, \hskip 5 pt & \text{if} \hskip 5 pt x<0.
\endaligned
\right.
$$

\textbf{Definition 1.} \cite[Definition 4.7(1)]{paperengsch} Equation (\ref{sdrlch}) has a \textbf{weak solution} if for given functions $b(x),$ $\sigma (x)$ and a constant $\beta$ there exist a probability space $(\Omega, \mathfrak{F}, \mathfrak{F}_{t}, \pr)$  with flow of $\sigma$-algebras $\mathfrak{F}_{t},$ $t\geq0$, continuous semimartingale $(\xi(t), \mathfrak{F}_{t})$ and a standard one-dimensional Wiener process $(w(t), \mathfrak{F}_{t} )$ such as
\begin{equation}\label{l}
L^\xi(t,0)=\lim_{\delta\rightarrow0}\frac{1}{2\delta}\int_0^tI_
{(-\delta,\delta)}(\xi(s))ds
\end{equation}
exists almost surely and (\ref{sdrlch}) takes place almost surely.

\textbf{Definition 2.} \cite[Definition 4.7(2)]{paperengsch} Equation (\ref{sdrlch}) has a \textbf{strong solution} if for the given functions $b(x),$ $\sigma (x)$ and a constant $\beta$ relations (\ref{sdrlch}) and (\ref{l}) take place almost surely (a.s.) on the given probability space $(\Omega, \mathfrak{F}, \mathfrak{F}_{t}, \pr)$  with flow of $\sigma$-algebras $\mathfrak{F}_{t},$ $t\geq0$ and given Wiener process $(w(t), \mathfrak{F}_{t} ).$

\textbf{Definition 3.} \cite[Chapter IV, Definition 1.4]{VatIke81}
 The \textbf{uniqueness of solutions} (or \textbf{weak uniqueness} or \textbf{uniqueness in the sense of probability}) for equation (\ref{sdrlch}) holds if whenever $X$ and $X'$ are two solutions of (\ref{sdrlch}) such as $X(0)=0$ a.s. and $X'(0)=0$ a.s., then the laws on the space $\mathbb{C}([0, \infty])$ of the processes $X$ and $X'$ coincide.

\textbf{Definition 4.} \cite[Chapter IV, Definition 1.5]{VatIke81}
We say that the \textbf{pathwise uniqueness} (or \textbf{strong uniqueness}) \textbf{of solutions} for equation (\ref{sdrlch}) holds if whenever $X$ and $X'$ are any two solutions defined on the same probability space $(\Omega, \mathfrak{F}, \mathfrak{F}_{t}, \pr)$ with the same flow of $\sigma$-algebras $\mathfrak{F}_{t},$ and the same one-dimensional Wiener process $(w(t), \mathfrak{F}_{t})$ such as $X(0) = X'(0)$ a.s., then $X(t) =X'(t)$ for all $t>0$ a.s.

\vskip 5 pt
\noindent For coefficients of equation (\ref{sdrlch}) we consider next condition $( I )$.
\vskip 5 pt

\textbf{Condition (I)}: \\
$I_1.$ Function $b(x)$ be continuous, $b(0)=0$ and null point is
its unique zero. \\
$I_2.$ There is a constant $\Lambda\geq1$ such as
$$ |b(x)|^2 + \sigma^2(x)\leq\Lambda(1+|x|^2), \, \, \sigma^2(x)\geq\Lambda^{-1}.$$
$I_3.$ For every $x$ and $y$ the product $\sigma(x)\sigma(y)>0$ and for
every $N<\infty$
$$\sup_{-N=x_0<...<x_k=N}\sum^k_{i=1} |\sigma (x_i)-\sigma
(x_{i-1})|<\infty.$$
$I_4.$ The constant $|\beta|<1.$

\vskip 5 pt

The paper is organized as follows. Section 2 contains results for
ordinary differential equations. Main results of the paper are
formulated in Section 3 and are proved in Section 4. Section 5
contains some examples. \vskip 10 pt

\section{Ordinary Differential Equations.}

Consider Cauchy problem (\ref{zdr}). By conditions $I_1$ and $I_2$
for the function $b(x)$ this problem has at least one solution --
zero -- and all the solutions of this problem pass through the point
$(0; 0)$. This set of all curves is called integral funnel and denote by
$\mathfrak R$. From existence of two different solutions it follows
that there are infinitely many solutions. Each solution from the
integral funnel is located between two special solutions which are
called extremal - upper $\overline{y}(t)$ and lower
$\underline{y}(t)$, where $\overline{y}(t)=\sup\{ y(t), y(t)\in
\mathfrak R\}$, $\underline{y}(t)=\inf\{y(t), y(t)\in \mathfrak
R\}$.

\vskip 5 pt
Note that if $b(x)x<0 $ for $x\neq0,$ then the problem (\ref{zdr}) has only zero solution.

\vskip 5 pt
Existence of nonzero solution of (\ref{zdr}) requires convergence at least one of integrals
\begin{equation}\label{cond1}
\int^\delta_0\frac{1}{b(y)}dy, \,\,\int^0_{-\delta}\frac{1}{b(y)}dy.
\end{equation}

So interesting are the following cases:
\vskip 5 pt

\noindent$A_1.$ Function $b(x)x>0$ for $x\neq0$ and both integrals in (\ref{cond1}) are convergent.\\
$A_2.$ Function $b(x)x>0$ for $x\neq0$ and the first integral in (\ref{cond1}) converges and the second one diverges.\\
$A_3.$ Function $b(x)x>0$ for $x\neq0$ and the first integral in (\ref{cond1}) diverges and the second one converges.\\
$A_4.$ Function $b(x)>0$ for $x\neq0$ and the first integral in (\ref{cond1}) converges.\\
$A_5.$ Function $b(x)<0$ for $x\neq0$ and the second integral in (\ref{cond1}) converges.

\vskip 5 pt
Let $\displaystyle H(x)=\int_0^x\frac{1}{b(y)}dy$ for $x\geq0$ and $\displaystyle K(x)=\int_x^0\frac{1}{b(y)}dy$ for $ x\leq0$. If condition $I_1$ takes place, then these functions are strictly monotone. Denote the inverse functions of $H(x)$ and $K(x)$ by $H^{-1}(x)$ and $K^{-1}(x)$ respectively.

\vskip 5 pt

\begin{lem}\label{lem1} \cite[Lemma 2.2 and Lemma 2.3]{krmah13}.\\
{\it \textbf{1.} In case $A_1$
all nonzero solutions of the problem (\ref{zdr}) are given by:
\begin{equation}\label{ylam}
y_\lambda(t)=H^{-1}\big((t-\lambda)^+\big),\;\;\lambda\geq0,
\end{equation}
\begin{equation}\label{ymu}
y_\mu(t)=K^{-1}\big(-(t-\mu)^+\big),\;\;\mu\geq0.
\end{equation}
In this case the extremal solutions are $\overline{y}(t)=H^{-1}(t), $ $\underline{y}(t)=K^{-1}(t).$ \\
\textbf{2.} In cases $ A_2 $ and $ A_4 $
all nonzero solutions of the problem (\ref{zdr}) have the form (\ref{ylam}) and the
 extremal solutions are $\overline{y}(t)=H^{-1}(t),$ $\underline{y}(t)=0.$\\
\textbf{3.} In cases $ A_3 $ and $ A_5 $
all nonzero solutions of the problem (\ref{zdr}) have the form (\ref{ymu}) and the
extremal solutions are $\overline{y}(t)=0,$ $\underline{y}(t)=K^{-1}(t).$}
\end{lem}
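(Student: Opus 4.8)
The plan is to derive everything from the classical separation-of-variables identity together with careful bookkeeping of the sign of $b$. First I would record the basic fact: if $y$ solves (\ref{zdr}) and $(a,b)$ is an interval on which $y$ keeps a fixed nonzero sign, then dividing $y'=b(y)$ by $b(y)$ and integrating gives
\[
\int_{y(t_1)}^{y(t_2)}\frac{ds}{b(s)}=t_2-t_1,\qquad a<t_1<t_2<b,
\]
the left-hand integral being proper because, by $I_1$, $b$ does not vanish on the interval joining $y(t_1)$ and $y(t_2)$. Already this shows that on each sign-branch the solution is uniquely determined by its value at one point, even though $b$ is only continuous; and by $I_2$ the bound $|b(x)|\le\sqrt\Lambda(1+|x|)$ gives $H(+\infty)=K(-\infty)=\infty$, so that $H^{-1}$ and $K^{-1}$ are defined on the whole half-lines we need.

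Next I would classify the solutions. Let $y$ be a nonzero solution and set $\lambda=\sup\{t\ge0:\ y\equiv0\text{ on }[0,t]\}$; by continuity $y\equiv0$ on $[0,\lambda]$ and $\lambda<\infty$. Going through the cases: in $A_1,A_2,A_4$ one has $b>0$ on $(0,\infty)$, so a positive solution is strictly increasing and cannot return to $0$; in $A_1,A_3,A_5$ one has $b<0$ on $(-\infty,0)$, so a negative solution is strictly decreasing and cannot return to $0$; on the remaining half-lines the sign of $b$ pushes the solution back toward $0$, so $y$ cannot even detach from $0$ in that direction. Hence $y$ has constant sign on $(\lambda,\infty)$. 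Letting $t_1\downarrow\lambda$ in the identity above (so $y(t_1)\to0$), the left side tends to $H(y(t_2))$ in the upward case and to $-K(y(t_2))$ in the downward case; this limit is finite precisely when the corresponding integral in (\ref{cond1}) converges, and then it equals $t_2-\lambda$, i.e. $H(y(t))=t-\lambda$, so $y=y_\lambda$ as in (\ref{ylam}), respectively $K(y(t))=-(t-\lambda)$, so $y=y_\mu$ as in (\ref{ymu}); which of the two (or both) can occur is exactly the content of the list $A_1$–$A_5$. For the converse one checks that the functions (\ref{ylam}) and (\ref{ymu}) really solve (\ref{zdr}); the only non-routine point is the corner at $t=\lambda$ (resp. $t=\mu$), where the right derivative must equal $b(0)=0$, and this follows from $H^{-1}(u)/u\to0$ as $u\downarrow0$, which in turn holds because $H'(x)=1/b(x)\to\infty$ as $x\downarrow0$ (symmetrically for $K$).

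I expect this classification step to be the main obstacle: excluding that a nonzero solution oscillates around $0$ or touches $0$ again at a later time. It is precisely here that the convergence-versus-divergence dichotomy for the integrals (\ref{cond1}) is combined with the one-sided monotonicity forced by the sign of $b$; once these are in place the explicit formulae (\ref{ylam})–(\ref{ymu}) fall out.

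Finally, for the extremal solutions: for fixed $t$ the map $\lambda\mapsto H^{-1}((t-\lambda)^+)$ is non-increasing and non-negative, so its supremum over $\lambda\ge0$ is attained at $\lambda=0$; symmetrically $\mu\mapsto K^{-1}(-(t-\mu)^+)$ attains its infimum over $\mu\ge0$ at $\mu=0$; and the zero solution lies between the two families. Consequently $\overline y$ is the $\lambda=0$ curve $H^{-1}(t)$ whenever the upward branch is present ($A_1,A_2,A_4$) and $\overline y\equiv0$ otherwise ($A_3,A_5$), while $\underline y$ is the $\mu=0$ curve whenever the downward branch is present ($A_1,A_3,A_5$) and $\underline y\equiv0$ otherwise ($A_2,A_4$), which is the assertion. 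This reproduces Lemmas~2.2 and~2.3 of \cite{krmah13}.
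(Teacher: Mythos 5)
The paper itself gives no proof of this lemma (it is quoted from \cite[Lemmas 2.2 and 2.3]{krmah13}), so your argument can only be measured against the standard proof, and your route is exactly that standard one: the separation identity on sign-branches, classification by the detachment time $\lambda$, the convergence/divergence dichotomy for the integrals (\ref{cond1}), verification of the corner at $t=\lambda$ via $H^{-1}(u)/u\to0$, and extremality by monotonicity in $\lambda$ and $\mu$. In outline it is sound, but one step of your case bookkeeping is wrong precisely where the integrals matter. In case $A_2$ the drift on the negative half-line satisfies $b<0$ (because $b(x)x>0$), and in case $A_3$ the drift on the positive half-line satisfies $b>0$; so on these ``remaining half-lines'' the sign of $b$ pushes a solution \emph{away} from zero, not back toward it, and your sentence ``the sign of $b$ pushes the solution back toward $0$, so $y$ cannot even detach'' only covers the positive side in $A_5$ and the negative side in $A_4$. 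What forbids downward detachment in $A_2$ and upward detachment in $A_3$ is the divergence of the corresponding integral in (\ref{cond1}): in your own identity, letting $t_1\downarrow\lambda$ makes the left-hand side tend to the improper integral, which is infinite in the divergent case, while the right-hand side stays equal to $t_2-\lambda<\infty$. You state this mechanism two sentences later, so the repair is only to invoke it for these two sub-cases instead of the (false) sign claim; as written, though, the exclusion step is misattributed and incomplete.

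Two smaller points of sign convention. With $K(x)=\int_x^0 dy/b(y)$ and $b<0$ on the negative half-line one has $K\le0$ and $K(-\infty)=-\infty$, not $+\infty$; what you actually need is $|K(x)|\to\infty$ as $x\to-\infty$, which indeed follows from $I_2$ and guarantees that $K^{-1}$ is defined on all of $(-\infty,0]$. Accordingly the lower extremal solution is $K^{-1}(-t)$, i.e.\ (\ref{ymu}) with $\mu=0$, which is what your monotonicity argument produces; the ``$K^{-1}(t)$'' in the statement is the cited paper's convention (compare the later sentence in Section 4 where the lower extremal is written $\widetilde K^{-1}(-t)$), so you are consistent with the displayed family even though you do not remark on the discrepancy. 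With the $A_2$/$A_3$ exclusion argued from divergence rather than from the sign, your proof is complete and matches the intended one.
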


\vskip 5 pt For study of the weights of a limit measure we need to compute an expression
\begin{equation}\label{gamma}\Gamma_K=
\lim_{\varepsilon\rightarrow 0}\frac{-A_\beta^\varepsilon(-K)}{A_\beta^\varepsilon(K)-A_\beta^\varepsilon(-K)},
\end{equation}
where \hskip 15 pt
$\displaystyle A_\beta^\varepsilon(x)=\int_0^x\exp\Biggl\{-\frac{2}{\varepsilon^2}\int_0^z \frac{(1+\beta \text{sgn} (v))b
\big((1+\beta \text{sgn} (v))v\big)}{\sigma^2\big((1+\beta \text{sgn} (v))v\big)}dv\Biggl\}dz.$

\vskip 5 pt
To calculate $\Gamma_K $ put
\begin{equation}\label{lbig}
L(x)=\int^x_0 \frac{b(y)}{\sigma^2(y)}dy.
\end{equation}

\begin{lem}\label{glem2} {\it Let $b(x)x>0$ for $x\neq 0,$
for some constants $d >0,$ $\delta>0$ and $\gamma$ as $x\to 0+$}
\begin{equation}\label{glem11}
L(x)|\ln L(x)|^\gamma\sim dx^\delta
\end{equation}
{\it and for some constants $k>0,$  $\mu>0$ and $\theta$ as $x\to 0-$}
\begin{equation}\label{glem21}
L(x)|\ln L(x)|^\theta\sim k|x|^\mu.
\end{equation}
{\it Then the value of $\Gamma_K$ is independent on $K$ (so we denote it by $\Gamma$) and take place the following statements:\\
1. If $\delta=\mu$ and $\gamma=\theta$, then
$$\Gamma= \frac{1}{1+\frac{1-\beta}{1+\beta}\Big(\frac{k}{d}\Big)^{\frac{1}{\delta}}}.$$
2. If $\delta<\mu$ or $\delta=\mu\;$ and  \; $\gamma<\theta\;$, then $\Gamma=1$.\\
3. If $\delta>\mu$ or $\delta=\mu\;$ and  \; $\gamma>\theta\;$, then $\Gamma=0$.}
\end{lem}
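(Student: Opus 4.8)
\proof The plan is to first express $A_\beta^\varepsilon(\pm K)$ through $L$. On the domain of the inner integral of $A_\beta^\varepsilon(x)$ the sign of the integration variable is constant, so the substitution $w=(1+\beta)v$ (resp. $w=(1-\beta)v$) turns $\int_0^z\frac{(1+\beta\,\mathrm{sgn}(v))b((1+\beta\,\mathrm{sgn}(v))v)}{\sigma^2((1+\beta\,\mathrm{sgn}(v))v)}\,dv$ into $L\big((1+\beta)z\big)$ for $z>0$ (resp. $L\big((1-\beta)z\big)$ for $z<0$). Since $|\beta|<1$, a further change $s=(1\pm\beta)z$ gives
\[
A_\beta^\varepsilon(K)=\frac1{1+\beta}\int_0^{(1+\beta)K}e^{-\frac2{\varepsilon^2}L(s)}\,ds,\qquad -A_\beta^\varepsilon(-K)=\frac1{1-\beta}\int_0^{(1-\beta)K}e^{-\frac2{\varepsilon^2}L(-s)}\,ds .
\]
Under Condition~(I) and $b(x)x>0$ the function $L$ is continuous, strictly increasing on $(0,\infty)$, strictly decreasing on $(-\infty,0)$, with $L(0)=0$ and $L>0$ off the origin; in particular both integrals are positive, and dividing numerator and denominator by $-A_\beta^\varepsilon(-K)$ we get $\Gamma_K=\lim_{\varepsilon\to0}\big(1+A_\beta^\varepsilon(K)/(-A_\beta^\varepsilon(-K))\big)^{-1}$, the quotient being positive.

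The heart of the proof is a Laplace-type asymptotic: if $g$ is continuous and strictly increasing on $[0,a]$ with $g(0)=0$ and $g(x)|\ln g(x)|^p\sim c_0x^q$ as $x\to0+$, for constants $c_0>0$, $q>0$ and $p\in\mathbb R$, then
\[
\int_0^a e^{-\frac2{\varepsilon^2}g(s)}\,ds\ \sim\ c_0^{-1/q}\,\Gamma\!\Big(\tfrac1q+1\Big)\Big(\tfrac{\varepsilon^2}{2}\Big)^{1/q}(2|\ln\varepsilon|)^{p/q}\qquad(\varepsilon\to0).
\]
I would prove this in four steps. (i) Taking logarithms in the hypothesis gives $\ln g(x)\sim q\ln x$, hence $|\ln g(x)|\sim q|\ln x|$, and inverting the relation gives $g^{-1}(u)\sim c_0^{-1/q}u^{1/q}|\ln u|^{p/q}$ as $u\to0+$. (ii) Changing variables $u=g(s)$ and integrating by parts in the Stieltjes sense, $\int_0^a e^{-\frac2{\varepsilon^2}g(s)}\,ds=ae^{-2g(a)/\varepsilon^2}+\frac2{\varepsilon^2}\int_0^{g(a)}g^{-1}(u)e^{-2u/\varepsilon^2}\,du$, the first term being exponentially small; this route uses only $g^{-1}(u)$, not its derivative. (iii) Given $\eta>0$, choose $u_0\in(0,1)$ so that $(1-\eta)c_0^{-1/q}u^{1/q}|\ln u|^{p/q}\le g^{-1}(u)\le(1+\eta)c_0^{-1/q}u^{1/q}|\ln u|^{p/q}$ on $(0,u_0)$, and discard the contribution of $(u_0,g(a))$, which is $O(e^{-2u_0/\varepsilon^2})$. (iv) Substitute $u=\frac{\varepsilon^2}{2}v$ in $\frac2{\varepsilon^2}\int_0^{u_0}u^{1/q}|\ln u|^{p/q}e^{-2u/\varepsilon^2}\,du$ (which, up to the factors $1\pm\eta$ and $c_0^{-1/q}$, sandwiches the integral of step (ii)), factor out $(\varepsilon^2/2)^{1/q}(2|\ln\varepsilon|)^{p/q}$ using $|\ln(\varepsilon^2v/2)|=2|\ln\varepsilon|-\ln(v/2)$, and pass to the limit
\[
\int_0^{2u_0/\varepsilon^2}e^{-v}v^{1/q}\Big(1-\tfrac{\ln(v/2)}{2|\ln\varepsilon|}\Big)^{p/q}\,dv\ \longrightarrow\ \int_0^{\infty}e^{-v}v^{1/q}\,dv=\Gamma\!\Big(\tfrac1q+1\Big);
\]
finally let $\eta\to0$.

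Applying this with $(g,c_0,q,p)=\big(L(\cdot),d,\delta,\gamma\big)$ and with $(g,c_0,q,p)=\big(L(-\cdot),k,\mu,\theta\big)$ --- the continuity and monotonicity of $L$ near $0$ coming from Condition~(I) and $b(x)x>0$, the asymptotics from \eqref{glem11} and \eqref{glem21} --- yields
\[
A_\beta^\varepsilon(K)\sim\frac{d^{-1/\delta}\,\Gamma(\tfrac1\delta+1)}{1+\beta}\Big(\tfrac{\varepsilon^2}{2}\Big)^{1/\delta}(2|\ln\varepsilon|)^{\gamma/\delta},\qquad -A_\beta^\varepsilon(-K)\sim\frac{k^{-1/\mu}\,\Gamma(\tfrac1\mu+1)}{1-\beta}\Big(\tfrac{\varepsilon^2}{2}\Big)^{1/\mu}(2|\ln\varepsilon|)^{\theta/\mu}.
\]
The leading terms are independent of $K$ ($K$ enters only the exponentially small remainders), so $\Gamma_K$ is the same for every $K$. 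Dividing, $A_\beta^\varepsilon(K)/(-A_\beta^\varepsilon(-K))$ is asymptotic to $\frac{1-\beta}{1+\beta}\cdot\frac{d^{-1/\delta}\Gamma(\tfrac1\delta+1)}{k^{-1/\mu}\Gamma(\tfrac1\mu+1)}\,(\varepsilon^2/2)^{1/\delta-1/\mu}(2|\ln\varepsilon|)^{\gamma/\delta-\theta/\mu}$. If $\delta=\mu$ and $\gamma=\theta$, the powers of $\varepsilon$ and of $|\ln\varepsilon|$ cancel (as do the two $\Gamma$-factors) and the quotient tends to $\frac{1-\beta}{1+\beta}(k/d)^{1/\delta}$, giving $\Gamma=\big(1+\frac{1-\beta}{1+\beta}(k/d)^{1/\delta}\big)^{-1}$, which is statement 1. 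If $\delta<\mu$, or $\delta=\mu$ and $\gamma<\theta$, then $1/\delta-1/\mu>0$ (resp.\ $\gamma/\delta-\theta/\mu<0$), the quotient tends to $0$ and $\Gamma=1$, which is statement 2. In the complementary case the quotient tends to $+\infty$ and $\Gamma=0$, which is statement 3.

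The main obstacle is the uniform control required in step (iv). For each fixed $v$ the factor $\big(1-\frac{\ln(v/2)}{2|\ln\varepsilon|}\big)^{p/q}$ tends to $1$, but when $p<0$ its base, although positive on all of $(0,2u_0/\varepsilon^2)$ once $u_0<1$, decreases to $0$ as $v\to 2u_0/\varepsilon^2$, so no single integrable majorant works on the whole range. I would resolve this by splitting the $v$-integral at $\varepsilon^{-1}$: on $(0,\varepsilon^{-1})$ the base stays $\ge\frac12$, so dominated convergence applies; on $[\varepsilon^{-1},2u_0/\varepsilon^2]$ the factor $e^{-v}\le e^{-1/\varepsilon}$ decays faster than any power of $\varepsilon^{-1}$ and thus beats the polynomial and logarithmic growth of the remaining factors $v^{1/q}$ and $(\text{base})^{p/q}$, making that part negligible. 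The same care guarantees that the inversion of step (i) is used only for arguments $u=\Theta(\varepsilon^2)\to0$, exactly where it is valid.
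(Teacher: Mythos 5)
Your argument is correct, and it reaches the lemma's conclusions by a genuinely more self-contained route than the paper. The paper's own proof is a short reduction: it observes that the inner integral in $A_\beta^\varepsilon$ equals $L_\beta(z)=L\bigl((1\pm\beta)z\bigr)$, transfers the hypotheses (\ref{glem11})--(\ref{glem21}) to $L_\beta$ with modified constants $d^*=d(1+\beta)^\delta$ and $k^*=k(1-\beta)^\mu$, and then simply invokes Lemma 2.8 of \cite{krmah13}, which already contains the Laplace-type analysis and gives $\Gamma$ through $(k^*/d^*)^{1/\delta}=\frac{1-\beta}{1+\beta}(k/d)^{1/\delta}$. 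You carry out the same $(1\pm\beta)$ reduction (by substituting in the outer integral rather than in the hypothesis) but then prove the key asymptotic $\int_0^a e^{-2L(s)/\varepsilon^2}ds\sim c_0^{-1/q}\,\Gamma\bigl(\tfrac1q+1\bigr)(\varepsilon^2/2)^{1/q}(2|\ln\varepsilon|)^{p/q}$ from scratch, via inversion of the asymptotic hypothesis, Stieltjes integration by parts, and dominated convergence with a cut at $v=\varepsilon^{-1}$; in effect you reprove the cited lemma, so your proof is independent of \cite{krmah13} at the cost of length, whereas the paper's is a two-line reduction to prior work. The constants and all three cases come out identically (when $\delta=\mu$ the Euler factors $\Gamma(1/\delta+1)$ cancel; when $\delta\ne\mu$ the power of $\varepsilon$ dominates any logarithm), and the $K$-independence is correctly traced to the exponentially small remainders. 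One small point to add for completeness: your domination argument only treats the difficulty at the upper end of the $v$-range (relevant for $p<0$); for $p>0$ the factor $\bigl(1-\tfrac{\ln(v/2)}{2|\ln\varepsilon|}\bigr)^{p/q}$ also blows up as $v\to0$ for fixed $\varepsilon$, and you should note the $\varepsilon$-independent majorant $e^{-v}v^{1/q}\bigl(1+|\ln(v/2)|\bigr)^{p/q}$ (valid once $2|\ln\varepsilon|\ge1$), which is integrable near $0$; with that remark the argument is complete.
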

\vskip 5 pt
 {\it Proof.}
Denote
$$L_\beta(x)=\int^x_0 \frac{(1+\beta \text{sgn} (y))b\big((1+\beta \text{sgn} (y))y\big)}{\sigma^2\big((1+\beta \text{sgn} (y))y\big)}dy.
$$
Let $x>0$ then:
$$L_\beta(x)=\int_0^x\frac{(1+\beta) b\big((1+\beta)y\big)}{a\big((1+\beta)y\big)}dy=
\int_0^{(1+\beta)x}\frac{b\big(y\big)}{a\big(y\big)}dy=L\big((1+\beta)x\big),$$
where function $L(x)$ defined by (\ref{lbig}).
So from the condition (\ref{glem11}) we have
$$L_\beta(x)|\ln L_\beta(x)|^\gamma\sim d(1+\beta)^\delta x^\delta=d^*x^\delta,$$
where $d^*=d(1+\beta)^\delta$.

For $x<0$ from the condition (\ref{glem21}) by analogy we have
$$L_\beta(x)|\ln L_\beta(x)|^\gamma\sim k^*|x|^\mu,$$
where $k^*=k(1-\beta)^\mu$.

Using now \cite[lemma 2.8]{krmah13} we get a statement of the lemma \ref{glem2}.\\

 {\it Lemma \ref{glem2} is proved.}

\vskip 10 pt

\section {Main results.}
\vskip 5 pt It is known that if conditions $ I_2 $ and $I_4$ are
hold, then there exists a weak unique weak solution of equation
(\ref{sdrlch}) \cite[theorem 4.35]{paperengsch}. By connection
between stochastic equations with local time and Ito stochastic
equations \cite{papermahno01}, one can prove next theorem.

\vskip 5 pt
\begin{thm}\label{exst} {\it Let for the coefficients of the equation (\ref{sdrlch}) conditions $I_2$, $I_3$ and $I_4$ are hold. Then for every fixed $\varepsilon>0$ equation (\ref{sdrlch}) has a pathwise unique strong solution.}
\end{thm}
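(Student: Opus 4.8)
The plan is to remove the local time by a bi-Lipschitz change of the space variable, reducing (\ref{sdrlch}) to an ordinary Itô equation, and then to apply classical one-dimensional results on strong existence and pathwise uniqueness. Following \cite{papermahno01}, put $\varphi(x)=x/(1+\beta\,\text{sgn}(x))$ for $x\neq 0$ and $\varphi(0)=0$; by $I_4$ this is a strictly increasing bi-Lipschitz homeomorphism of $\mathbb R$ with $\varphi'(0+)=1/(1+\beta)$ and $\varphi'(0-)=1/(1-\beta)$. Applying the It\^o--Tanaka formula to $\varphi(\xi_\varepsilon)$ and using that the one-sided slopes of $\varphi$ were chosen exactly so that the resulting local-time contribution cancels $\beta L^{\xi_\varepsilon}(t,0)$, one gets that $\xi_\varepsilon$ is a weak (resp.\ strong) solution of (\ref{sdrlch}) on a given probability space and Wiener process $w$ if and only if $\eta_\varepsilon:=\varphi(\xi_\varepsilon)$ solves
$$
\eta_\varepsilon(t)=\int_0^t\widehat b(\eta_\varepsilon(s))\,ds+\varepsilon\int_0^t\widehat\sigma(\eta_\varepsilon(s))\,dw(s),
$$
where $\widehat b(y)=\dfrac{b\big((1+\beta\,\text{sgn}(y))y\big)}{1+\beta\,\text{sgn}(y)}$ and $\widehat\sigma(y)=\dfrac{\sigma\big((1+\beta\,\text{sgn}(y))y\big)}{1+\beta\,\text{sgn}(y)}$ (consistent with the scale density appearing in $A_\beta^\varepsilon$). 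Since $\varphi$ is a bi-Lipschitz bijection with $\varphi(0)=0$, existence of a strong solution and pathwise uniqueness for (\ref{sdrlch}) are equivalent to the same properties for the transformed equation, so it suffices to treat the latter.

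Next I would check that $\widehat b,\widehat\sigma$ belong to a class for which one-dimensional theory applies. From $I_2$ together with $|\beta|<1$ both coefficients have linear growth, hence the transformed equation does not explode and it is enough to argue on each interval $[-N,N]$. There $\widehat b$ is Borel measurable and bounded (locally bounded by $I_2$), while $\widehat\sigma$ is bounded, of bounded variation on $[-N,N]$ --- being the product of the step function $1/(1+\beta\,\text{sgn}(\cdot))$ with the composition of the bounded-variation function $\sigma$ (condition $I_3$) and the piecewise-linear increasing map $y\mapsto(1+\beta\,\text{sgn}(y))y$, and bounded variation is preserved by such compositions and products --- and bounded away from zero, since $\widehat\sigma^2(y)\ge \Lambda^{-1}(1+|\beta|)^{-2}>0$ by $I_2$, $I_4$ (and $I_3$, which forces $\sigma$ to keep a constant sign).

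The conclusion then follows from standard facts. A weak solution of the transformed equation exists by the Engelbert--Schmidt criterion (equivalently, it is the weak existence provided by \cite[theorem 4.35]{paperengsch} transported through $\varphi$). For a one-dimensional It\^o equation whose diffusion coefficient is bounded, bounded away from $0$ and of bounded variation on compact sets, with bounded measurable drift, pathwise uniqueness holds by Nakao's theorem; applying it to the equation stopped at the exit time from $[-N,N]$ and letting $N\to\infty$ (no explosion) yields pathwise uniqueness on all of $[0,\infty)$. By the Yamada--Watanabe theorem, weak existence plus pathwise uniqueness give a pathwise unique strong solution of the transformed equation, and pushing it back by $\varphi^{-1}$ produces the pathwise unique strong solution of (\ref{sdrlch}).

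The main obstacle is the part only sketched here: the careful verification that the It\^o--Tanaka change of variables genuinely eliminates the term $\beta L^{\xi_\varepsilon}(t,0)$ (this is exactly where $|\beta|<1$ and the precise one-sided slopes of $\varphi$ are used, and where one must handle the symmetric local time at the single kink of $\varphi$), and the confirmation that the jump of the multiplier $1/(1+\beta\,\text{sgn}(\cdot))$ does not destroy the bounded-variation property of $\widehat\sigma$ needed for Nakao's theorem; the localization in $N$ is unavoidable because $I_2$ gives only linear growth rather than global boundedness. Granting these points, the probabilistic core --- Nakao's pathwise uniqueness together with Yamada--Watanabe --- is routine.
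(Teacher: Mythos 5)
Your proposal is correct and takes essentially the same route as the paper: the paper also removes the local time by the same piecewise-linear change of variables (the map $\varphi$, inverse of $\kappa$), reduces (\ref{sdrlch}) to the It\^o equation (\ref{sdrito}) whose coefficients $\widetilde b,\widetilde\sigma$ coincide with your $\widehat b,\widehat\sigma$, obtains a pathwise unique strong solution of that equation, and transfers it back. The only difference is that where you argue directly via It\^o--Tanaka, Nakao-type pathwise uniqueness and Yamada--Watanabe, the paper cites \cite[Lemma 1]{papermahno03} for the correspondence $\xi_\varepsilon=\kappa(\eta_\varepsilon)$ and \cite[Theorem 3.2]{krmah13} for the transformed equation.
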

\vskip 5 pt

Consider the stochastic equation with local time (\ref{sdrlch}) and corresponding to it Cauchy problem (\ref{zdr}). Denote by $\mu_\varepsilon$ measures generated by processes $\xi_\varepsilon(\cdot)$ on the space $(\mathbb{C}[0,\infty), \mathfrak B) .$

\vskip 5 pt
\begin{thm}\label{thbaldi}
 {\it Suppose that the conditions $I_1,$ $I_2,$ $І_4,$ $A_1,$ (\ref{glem11}), (\ref{glem21}) are hold for
the coefficients of equation (\ref{sdrlch}). Then for measures $\{\mu_\varepsilon\}$ and for any bounded
continuous functional $F$ defined on the space $\mathbb{C}[0,\infty)$, the equality
\begin{equation}\label{osn}
\lim_{\varepsilon\to 0}\int_{\mathbb{C}[0,\infty)} F(f)\mu_\varepsilon(df)= \Gamma F(\overline{y})+
\big(1-\Gamma\big)F(\underline{y}),
\end{equation}
takes place, where $\overline{y}, \underline{y}$ are extremal solutions of the problem (\ref{zdr}),
and the value of $\Gamma$ defined by the lemma \ref{glem2}.}
\end{thm}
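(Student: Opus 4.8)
\proof
The plan is to remove the local-time term by passing to an ordinary It\^o equation and then to study the behaviour of its solution near the repulsive point $0$, along the lines of \cite{krmah13}. By the correspondence between stochastic equations with local time and It\^o equations (\cite{papermahno01}), which already yields Theorem~\ref{exst}, there is a continuous map $\Theta\colon\mathbb{C}[0,\infty)\to\mathbb{C}[0,\infty)$, not depending on $\varepsilon$, such that $\mu_\varepsilon$ is the image under $\Theta$ of the law $\tilde\mu_\varepsilon$ of the solution $\eta_\varepsilon$ of
\begin{equation}\label{peq}
\eta_\varepsilon(t)=\int_0^t \tilde b\big(\eta_\varepsilon(s)\big)\,ds+\varepsilon\int_0^t \tilde\sigma\big(\eta_\varepsilon(s)\big)\,dw(s),
\end{equation}
where $\tilde b(v)=(1+\beta\,\text{sgn}(v))\,b\big((1+\beta\,\text{sgn}(v))v\big)$ and $\tilde\sigma(v)=\sigma\big((1+\beta\,\text{sgn}(v))v\big)$; the scale function of (\ref{peq}) is exactly $A_\beta^\varepsilon$. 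Under $I_1,I_2,I_4$ the pair $\tilde b,\tilde\sigma$ satisfies the same structural conditions ($\tilde b$ continuous, $\tilde b(v)v>0$ for $v\neq0$, $\tilde\sigma^2\ge\Lambda^{-1}$, linear growth), and since $\int_0^x dy/\tilde b(y)=(1+\beta)^{-2}H\big((1+\beta)x\big)$ for $x>0$ and $\int_x^0 dy/\tilde b(y)=(1-\beta)^{-2}K\big((1-\beta)x\big)$ for $x<0$, case $A_1$ holds for the Cauchy problem $z'(t)=\tilde b(z(t))$, $z(0)=0$; a direct computation with $H,K$ shows that $\Theta$ carries its extremal solutions $\overline z,\underline z$ to $\overline y,\underline y$ and the zero solution to the zero solution, and, as in the proof of Lemma~\ref{glem2}, conditions (\ref{glem11})--(\ref{glem21}) translate into the corresponding conditions for $L_\beta$. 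Thus it suffices to show $\tilde\mu_\varepsilon\Rightarrow\Gamma\delta_{\overline z}+(1-\Gamma)\delta_{\underline z}$.

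The family $\{\tilde\mu_\varepsilon\}_{\varepsilon\le1}$ is tight on $\mathbb{C}[0,\infty)$: the linear growth of $\tilde b,\tilde\sigma$ together with the Burkholder--Davis--Gundy and Gronwall inequalities give $\mathsf E\sup_{t\le N}|\eta_\varepsilon(t)|^{2}\le C_N$ and $\mathsf E|\eta_\varepsilon(t)-\eta_\varepsilon(s)|^{4}\le C_N|t-s|^{2}$, uniformly in $\varepsilon\le1$ and $s,t\le N$. If $\tilde\mu_{\varepsilon_n}\Rightarrow\nu$ along a subsequence, then $\mathsf E\sup_{t\le N}\big|\varepsilon_n\int_0^t\tilde\sigma(\eta_{\varepsilon_n}(s))\,dw(s)\big|^{2}\le C_N\varepsilon_n^{2}\to0$, so $\nu$ is concentrated on the integral funnel of $z'=\tilde b(z)$, $z(0)=0$, which by Lemma~\ref{lem1} (case $A_1$) consists of $z\equiv0$, the delayed upper solutions and the delayed lower solutions. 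It remains to show that $\nu$ charges only $\overline z$ and $\underline z$, with weights $\Gamma$ and $1-\Gamma$; this determines $\nu$ uniquely, hence gives convergence of the whole family.

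Fix a small $K>0$ and put $\tau^\varepsilon=\inf\{t\ge0:|\eta_\varepsilon(t)|=K\}$. The technically hardest point is the \emph{fast-escape} estimate: $\tau^\varepsilon\to0$ in probability as $\varepsilon\to0$ for each fixed $K$. Here the convergence of both integrals in (\ref{cond1}) (condition $A_1$) is essential, together with $\tilde\sigma^2\ge\Lambda^{-1}$: expressing the mean exit time of the diffusion (\ref{peq}) from $[-K,K]$ through its speed measure and Green's function — equivalently, through Tanaka's and the occupation-times formulae for $\eta_\varepsilon$ stopped at $\tau^\varepsilon$ — one bounds $\mathsf E[\tau^\varepsilon]$ by $\int_0^K \frac{dy}{\tilde b(y)}+\int_{-K}^0\frac{dy}{-\tilde b(y)}+r_1(\varepsilon,K)$, where the two integrals are finite by $A_1$ (hence small for small $K$) and $r_1(\varepsilon,K)\to0$ as $\varepsilon\to0$; this is the analogue of the corresponding estimate in \cite{krmah13}. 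Since (\ref{peq}) is in natural scale $A_\beta^\varepsilon$, the two-point exit formula gives
\begin{equation}\label{exitprob}
\pr\{\eta_\varepsilon(\tau^\varepsilon)=K\}=\frac{-A_\beta^\varepsilon(-K)}{A_\beta^\varepsilon(K)-A_\beta^\varepsilon(-K)}=:\Gamma_K^\varepsilon .
\end{equation}
Because $\tilde b>0$ on $(0,\infty)$ and $A_\beta^\varepsilon$ is extremely steep for small $\varepsilon$, on the event $\{\eta_\varepsilon(\tau^\varepsilon)=K\}$ the process $\eta_\varepsilon$ stays positive after $\tau^\varepsilon$ off a set of probability $\to0$; then, using tightness and the uniqueness of the solution of $z'=\tilde b(z)$ started from the level $K>0$, $\eta_\varepsilon(\tau^\varepsilon+\cdot)$ converges, uniformly on compacts in probability, to that solution, which in turn tends to $\overline z$ as $K\to0$. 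Symmetrically, on $\{\eta_\varepsilon(\tau^\varepsilon)=-K\}$ one gets $\underline z$ in the limit $\varepsilon\to0$, $K\to0$.

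Collecting these facts, for every bounded continuous functional $F$ on $\mathbb{C}[0,\infty)$,
\[
\Big|\int_{\mathbb{C}[0,\infty)}F\,d\tilde\mu_\varepsilon-\Big(\Gamma_K^\varepsilon F(\overline z)+(1-\Gamma_K^\varepsilon)F(\underline z)\Big)\Big|\le\omega_F(K)+r_2(\varepsilon,K),
\]
where $\omega_F(K)\to0$ as $K\to0$ (uniform continuity of $F$ near $\overline z,\underline z$ and smallness of the pre-escape excursion inside $[-K,K]$) and, for each fixed $K$, $r_2(\varepsilon,K)\to0$ as $\varepsilon\to0$. By Lemma~\ref{glem2}, $\Gamma_K^\varepsilon\to\Gamma$ as $\varepsilon\to0$, with $\Gamma$ independent of $K$ under (\ref{glem11})--(\ref{glem21}). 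Letting first $\varepsilon\to0$ and then $K\to0$ gives $\int F\,d\tilde\mu_\varepsilon\to\Gamma F(\overline z)+(1-\Gamma)F(\underline z)$, i.e.\ $\tilde\mu_\varepsilon\Rightarrow\Gamma\delta_{\overline z}+(1-\Gamma)\delta_{\underline z}$. Applying the continuous map $\Theta$ and using $\Theta(\overline z)=\overline y$, $\Theta(\underline z)=\underline y$, we obtain (\ref{osn}). The only genuinely delicate step is the fast-escape estimate; the remaining ingredients are tightness, the one-dimensional two-point exit formula, and a comparison/uniqueness argument for the limiting flow.
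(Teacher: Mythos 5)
Your overall strategy is the same as the paper's (remove the local time by a piecewise-linear change of variables, prove the small-noise Peano result for the resulting It\^o equation, then push the limit measure back), but the reduction step contains a concrete error. The correct transformed coefficients (Makhno's lemma, used in the proof of Theorem \ref{exst}) are $\widetilde b(v)=b(\kappa(v))/(1+\beta\,\mathrm{sgn}(v))$ and $\widetilde\sigma(v)=\sigma(\kappa(v))/(1+\beta\,\mathrm{sgn}(v))$, with $\eta_\varepsilon=\varphi(\xi_\varepsilon)$, $\xi_\varepsilon=\kappa(\eta_\varepsilon)$. You instead take $\tilde b(v)=(1+\beta\,\mathrm{sgn}(v))\,b(\kappa(v))$ and $\tilde\sigma(v)=\sigma(\kappa(v))$. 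These two diffusions have the same scale function $A_\beta^\varepsilon$ (the ratio $\tilde b/\tilde\sigma^2$ coincides), which is why your formula for $\Gamma_K^\varepsilon$ still looks right, but they are different processes: yours is the correct one run with a state-dependent time change of rate $(1+\beta\,\mathrm{sgn})^2$, and such a time change is not a deterministic map on path space. Hence the claim that $\mu_\varepsilon=\Theta_*\tilde\mu_\varepsilon$ for an $\varepsilon$-independent continuous $\Theta$ fails for your equation. The error propagates to the identification of the limit curves: with your $\tilde b$ one has $\int_0^x dy/\tilde b(y)=(1+\beta)^{-2}H((1+\beta)x)$, so the upper extremal solution of $z'=\tilde b(z)$ is $\frac{1}{1+\beta}H^{-1}((1+\beta)^2 t)$ and $\kappa$ maps it to $H^{-1}((1+\beta)^2 t)\neq \overline y(t)=H^{-1}(t)$ (indeed not a solution of (\ref{zdr}) at all); the "direct computation" you invoke at this point does not go through. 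With the correct coefficients one gets $\widetilde H(x)=H((1+\beta)x)$, $\kappa(\overline z)=\overline y$, $\kappa(\underline z)=\underline y$ (Lemma \ref{lem2}), the scale function is still $A_\beta^\varepsilon$, and the argument closes exactly as in the paper.

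A secondary remark: after the reduction, the paper does not reprove the small-noise Peano result for the It\^o equation but cites Theorem 4.1 of \cite{krmah13}; you instead sketch its proof (tightness, concentration on the integral funnel, the exit formula, the fast-escape estimate). The sketch is plausible, but its two hardest points — the bound on the mean exit time from $[-K,K]$ and the claim that after exiting at $\pm K$ the path tracks the deterministic extremal solution with error $r_2(\varepsilon,K)\to0$ — are asserted by analogy rather than proved, so as written they rest on \cite{krmah13} anyway; citing that theorem directly (as the paper does) would be cleaner than an incomplete reconstruction.
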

\vskip 5 pt

For the investigation of cases $ A_2 - A_5 $ we will use a comparison
theorem. So we need a pathwise unique strong solutions of SDEs.

\vskip 5 pt

\begin{thm}\label{2}
 {\it Suppose that for the coefficients of the equation (\ref{sdrlch}) the condition (I) holds.

In the cases $A_2$, $A_4$ and if condition (\ref{glem11}) takes place, then
limit measure for the sequence $\{\mu_\varepsilon\}$ is concentrated on the upper extremal solution of the problem (\ref{zdr}).

In the cases $A_3$, $A_5$ and if condition (\ref{glem21}) takes place, then
limit measure for the sequ\-ence $\{\mu_\varepsilon\}$ is con\-cent\-rated
on the lower extremal solution of the problem (\ref{zdr}).}
\end{thm}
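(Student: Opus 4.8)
The plan is to prove that $\xi_\varepsilon\to\overline y$ in probability in the metric $\rho$ of $\mathbb C[0,\infty)$ in cases $A_2$ and $A_4$; the cases $A_3,A_5$ then follow by the substitution $x\mapsto -x$, which turns (\ref{sdrlch}) into an equation of the same type (with $b(x),\sigma(x),w$ replaced by $-b(-x),\sigma(-x),-w$), sends case $A_3$ into $A_2$ and $A_5$ into $A_4$, interchanges the two extremal solutions of (\ref{zdr}), and turns (\ref{glem21}) into (\ref{glem11}). Granting $\xi_\varepsilon\to\overline y$, bounded convergence gives $\int F\,d\mu_\varepsilon=\mathsf E\,F(\xi_\varepsilon)\to F(\overline y)$ for every bounded continuous $F$, i.e. the limit measure is $\delta_{\overline y}$, concentrated on the upper extremal solution. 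Since, as announced, the argument uses the comparison theorem, I would first pass, following \cite{papermahno01}, to the equivalent It\^o equation: there is an increasing bijection $\Phi$ of $\mathbb R$ (piecewise linear, depending only on $\beta$, nondegenerate by $I_4$) such that $\Phi(\xi_\varepsilon)$ solves an ordinary It\^o SDE whose drift and diffusion coefficient are assembled from $b,\sigma,\beta$ exactly as in the integrand defining $A_\beta^\varepsilon$ in (\ref{gamma}), and $\Phi$ carries the funnel of (\ref{zdr}) onto that of the transformed ODE, preserving extremal solutions. By Theorem \ref{exst} (valid for the modified coefficients used below, which still satisfy $I_2$--$I_4$), pathwise uniqueness holds, so the one--dimensional comparison theorem (\cite[Ch.~VI]{VatIke81}) applies to all the equations below driven by the same $w$, and all estimates may be carried out for the transformed process and pulled back through the monotone $\Phi$. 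I keep writing $\xi_\varepsilon,b,H,A_\beta^\varepsilon$ for the transformed objects.

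\emph{Upper bound.} Fix $\eta>0$ and choose a continuous $\overline b_\eta$ with $\overline b_\eta=b$ off $(-\eta,\eta)$, $\overline b_\eta\ge b$, and $\overline b_\eta\ge c(\eta)>0$ on $[-\eta,\eta]$. Then $\overline b_\eta$ has no zero, so $\dot y=\overline b_\eta(y),\ y(0)=0$ has a unique solution $\overline y_\eta$; since $\overline b_\eta=b$ outside $(-\eta,\eta)$ and, under $A_2$/$A_4$, the first integral in (\ref{cond1}) converges, $\int_0^\eta dy/b(y)\to0$, so $\overline y_\eta\downarrow\overline y$ uniformly on compacts as $\eta\downarrow0$. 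Let $\xi_\varepsilon^\eta$ solve the $\overline b_\eta$--equation driven by the same $w$; comparison gives $\xi_\varepsilon\le\xi_\varepsilon^\eta$, while the classical small--noise limit for a uniquely solvable ODE with nondegenerate diffusion (\cite{bookvenfr}, using $I_2$) gives $\xi_\varepsilon^\eta\to\overline y_\eta$ uniformly on compacts in probability. Letting $\varepsilon\to0$ and then $\eta\to0$ yields $\limsup_\varepsilon\pr\big(\sup_{[0,T]}(\xi_\varepsilon-\overline y)>\delta\big)=0$ for all $T,\delta>0$.

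\emph{Lower bound.} Here $\underline y=0$ (Lemma \ref{lem1}), so $\xi_\varepsilon$ cannot be squeezed from below by a uniquely solvable ODE, and I would argue via the scale function $A_\beta^\varepsilon$. For small $\eta>0$ the probability that $\xi_\varepsilon$ started at $0$ leaves $(-\eta,\eta)$ through $+\eta$ equals the ratio in (\ref{gamma}) with $K=\eta$, hence tends to $\Gamma$. In case $A_4$ the transformed drift is positive on $(-\infty,0)$, so $A_\beta^\varepsilon$ is there exponentially steep and this probability tends to $1$ directly; in case $A_2$ it tends to $\Gamma=1$ by Lemma \ref{glem2} (whose hypothesis $b(x)x>0$ holds), with (\ref{glem11}) controlling the right--hand exponents and divergence of the second integral in (\ref{cond1}) forcing the left--hand exponents to dominate, so part~2 of that lemma applies. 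Next, monotonicity of $A_\beta^\varepsilon$ on $(0,\infty)$ with a Laplace estimate gives $\pr_\eta(\xi_\varepsilon\text{ hits }0\text{ before }M)\to0$ for $M$ above the range of $\overline y$ on $[0,T]$, and the a priori bound from $I_2$ controls $\sup_{[0,T]}|\xi_\varepsilon|$; together with the strong Markov property this shows that, with probability tending to $1$, $\xi_\varepsilon$ reaches $\eta$ at a time $\tau_\varepsilon\to0$ in probability and stays positive on $[\tau_\varepsilon,T]$. On that event I compare $\xi_\varepsilon$ from below after $\tau_\varepsilon$ with the solution of the equation whose drift $\underline b_\eta$ equals $b$ on $\{x\ge\eta/2\}$ and is nonnegative, $\le b$, with the single zero $0$ on $\{x<\eta/2\}$, started from $\eta$ at time $\tau_\varepsilon$: its ODE limit from $\eta$ is the unique curve $t\mapsto H^{-1}\big(H(\eta)+(t-\tau_\varepsilon)^+\big)\ge H^{-1}\big((t-\tau_\varepsilon)^+\big)$, which for $\eta$ and $\tau_\varepsilon$ small exceeds $\overline y(t)-\delta=H^{-1}(t)-\delta$ on $[0,T]$ by uniform continuity of $H^{-1}$. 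Hence $\limsup_\varepsilon\pr\big(\inf_{[0,T]}(\xi_\varepsilon-\overline y)<-\delta\big)=0$.

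Combining the two bounds gives $\rho(\xi_\varepsilon,\overline y)\to0$ in probability, hence $\mu_\varepsilon\Rightarrow\delta_{\overline y}$, which is the assertion in cases $A_2,A_4$; the cases $A_3,A_5$ follow by $x\mapsto -x$ as above. The upper bound is routine (comparison plus the classical small--noise limit). The main obstacle is the lower bound, specifically: (i) upgrading the exit probability through $+\eta$ to $1$ from only (\ref{glem11}) together with divergence of the other integral in (\ref{cond1}), rather than from the full hypotheses of Lemma \ref{glem2}; and (ii) controlling $\xi_\varepsilon$ on $[0,\tau_\varepsilon]$ and just after the random escape time $\tau_\varepsilon$ --- precisely the region where all solutions of (\ref{zdr}) still coincide near $0$ --- so as to obtain genuine uniform--on--compacts closeness to $\overline y$ rather than mere eventual positivity.
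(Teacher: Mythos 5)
Your reduction step (passing to the It\^o equation (\ref{sdrito}) via the piecewise-linear change of variables and pulling extremal solutions back through it) is exactly the paper's route, which then simply invokes the It\^o-equation result \cite[Theorem 4.3]{krmah13} ``with appropriate modifications'' and returns to $\xi_\varepsilon$ via Lemma \ref{lem2}, as in the proof of Theorem \ref{thbaldi}. Where you diverge is that you try to reprove that cited It\^o-equation result from scratch, and the two points you yourself flag as ``obstacles'' are not minor loose ends: they are the entire technical content of the theorem, and as written your argument for them does not go through. For (i): in case $A_2$ you appeal to part 2 of Lemma \ref{glem2}, but that lemma assumes \emph{both} (\ref{glem11}) and (\ref{glem21}), and (\ref{glem21}) is precisely what Theorem \ref{2} does not grant you; moreover, divergence of the second integral in (\ref{cond1}) constrains $\int_{-\delta}^0 dy/|b(y)|$, whereas the exit probability (\ref{gamma}) is governed by the asymptotics of $L(x)=\int_0^x b/\sigma^2$ near $0-$, and there is no general inequality turning divergence of $\int dy/|b|$ into the upper bound on $L$ near $0-$ that you need to force $|A_\beta^\varepsilon(-K)|\gg A_\beta^\varepsilon(K)$ (a drift that is very small on a sparse set of intervals accumulating at $0$ and large in between makes $\int dy/|b|$ diverge while $L(x)$ still vanishes faster than the right-hand side, so the Laplace asymptotics can tip the wrong way). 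So ``the exit probability through $+\eta$ tends to $1$'' cannot be asserted from the stated hypotheses by the route you indicate; whatever argument is used here must exploit the actual hypotheses of \cite[Theorem 4.3]{krmah13}, not Lemma \ref{glem2}.

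For (ii): scale-function computations give exit \emph{probabilities} only; the statement that $\tau_\varepsilon$ (the exit time from $(-\eta,\eta)$) is small with probability tending to one, and that after $\tau_\varepsilon$ the path tracks $\overline y$ uniformly, needs expected-exit-time/Green-function (speed measure) or Laplace-type estimates, of which your sketch contains none --- and this is again exactly the machinery of \cite[Theorems 4.1, 4.3]{krmah13} that the paper leans on. Two smaller points: the comparison theorem you cite from \cite{VatIke81} requires a modulus-of-continuity condition on the diffusion coefficient, while here $\widetilde\sigma$ is only of bounded variation ($I_3$), so its applicability has to be justified (the paper secures pathwise uniqueness through Theorem \ref{exst} for this reason, but a comparison statement under $I_3$ still needs a reference or proof); and in your reflection $x\mapsto -x$ reducing $A_3,A_5$ to $A_2,A_4$ the constant $\beta$ becomes $-\beta$ (harmless under $I_4$, but it should be said). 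As it stands, then, your proposal is an honest program rather than a proof: the upper bound is fine, but the lower bound --- the heart of the theorem --- is left open at exactly the places you name.
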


\vskip 10 pt

\section {Proof of theorems.}

{\it Proof of theorem \ref{exst}}.

Solution of equation (\ref{sdrlch}) is strongly connected with solution of Ito's equation. Denote
\begin{equation}\label{kappa}
\kappa(x)=\left\{\aligned
(1-\beta)x, \quad & x\leq0 , \\ (1+\beta)x,\quad & x\geq0 ,
\endaligned \right.
\end{equation}
and let $\;\;\;\varphi(x)=\left\{\aligned
\frac{x}{1-\beta}, \quad & x\leq0 \\ \frac{x}{1+\beta},\quad & x\geq0
\endaligned \right.$\;\;\; be inverse function of $\kappa(x)$.
Further set
$$\widetilde{b}(x)=\frac{b(\kappa(x))}{1+\beta \text{sgn} (x)},\hskip 15 pt \widetilde{\sigma}(x)=\frac{\sigma(\kappa(x))}{1+\beta \text{sgn} (x)},$$
and consider Ito's stochastic equation:
\begin{equation}\label{sdrito}
\eta_\varepsilon(t)=\int_0^t \widetilde{b}(\eta_\varepsilon(s))ds+\varepsilon\int_0^t\widetilde{\sigma}(\eta_\varepsilon(s))dw(s), \qquad t\in [0,1].
\end{equation}

For function $\widetilde{b}(t)$, $\widetilde{\sigma}(t)$ also take
place conditions $I_2$ and $I_3$ as for functions $b(t)$, $\sigma(t)$.
Consequently we can use \cite[theorem 3.2]{krmah13} for equation (\ref{sdrito}). So for every fixed $\varepsilon>0$ equation (\ref{sdrito}) has a pathwise unique strong solution. But from \cite[Lemma 1]{papermahno03} we have
$\eta_\varepsilon(t)=\varphi(\xi_\varepsilon(t))$ or $\xi_\varepsilon(t)=\kappa(\eta_\varepsilon(t))$ so for every fixed $\varepsilon>0$ equation (\ref{sdrlch}) also has a pathwise unique strong solution.\\

{\it Theorem \ref{exst} is proved}.

\vskip 5 pt
Equation (\ref{sdrito}) corresponds with next Cauchy problem:
\begin{equation}\label{zdr2}
z'(t)=\widetilde{b}(z(t)),\hskip 10pt z(0)=0.
\end{equation}

By the lemma \ref{lem1} for the equation (\ref{zdr2}) we have next result: every solution of this problem is one of two following types
$$z_\lambda(t)=\widetilde{H}^{-1}\big((t-\lambda)^+\big),\;\;\lambda\geq0,$$
$$z_\mu(t)=\widetilde{K}^{-1}\big(-(t-\mu)^+\big),\;\;\mu\geq0,$$
where $\displaystyle \widetilde{H}(x)=\int_0^x\frac{1}{\widetilde{b}(y)}dy$ for $x\geq0$ ; $\displaystyle\widetilde{K}(x)=\int_x^0\frac{1}{\widetilde{b}(y)}dy$ for $x\leq0$.

It is clear that $\widetilde{H}^{-1}(t)$ and
$\widetilde{K}^{-1}(-t)$ are the solutions which first leave
$[-r,r]$. This functions are called extremal (upper and lower
respectively) and denote by $\overline{z}(t)$, $\underline{z}(t)$.

\vskip 5 pt

It is clear that  strictly positive and strictly increasing  for
$t>0$ functions $\overline{y}(t)$, $\overline{z}(t)$ are solutions
of problem (\ref{zdr}) and problem (\ref{zdr2}) respectively.
Similarly, strictly negative and strictly decreasing  for $t>0$
functions $\underline{y}(t)$, $\underline{z}(t)$ are solutions of
problem (\ref{zdr}) and problem (\ref{zdr2}) respectively. Let's
prove connection between extremal solutions of problem (\ref{zdr})
and problem (\ref{zdr2}).
\vskip 5 pt

\begin{lem}\label{lem2}
$\overline{y}(t)=\kappa(\overline{z}(t))$, $\underline{y}(t)=\kappa(\underline{z}(t))$.
\end{lem}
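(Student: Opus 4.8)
The plan is to use the explicit representations of the extremal solutions from Lemma~\ref{lem1} (applied to both ODEs) together with the substitution $\widetilde b(x)=b(\kappa(x))/(1+\beta\,\mathrm{sgn}(x))$ that was used to pass from (\ref{sdrito}) to (\ref{sdrlch}). The key observation is that $\widetilde H$ and $H$ — and likewise $\widetilde K$ and $K$ — are related by the change of variables $y=\varphi(u)$, i.e. $u=\kappa(y)$. Concretely, for $x\ge 0$ I would compute
$$\widetilde H(x)=\int_0^x\frac{dy}{\widetilde b(y)}=\int_0^x\frac{(1+\beta)\,dy}{b(\kappa(y))}=\int_0^x\frac{(1+\beta)\,dy}{b((1+\beta)y)}=\int_0^{(1+\beta)x}\frac{du}{b(u)}=H\big((1+\beta)x\big)=H\big(\kappa(x)\big),$$
using the substitution $u=(1+\beta)y$, $du=(1+\beta)\,dy$ in the third equality. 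An entirely analogous computation for $x\le 0$, where $\kappa(x)=(1-\beta)x$, gives $\widetilde K(x)=K\big((1-\beta)x\big)=K\big(\kappa(x)\big)$.

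From these identities the rest is inversion bookkeeping. Since $H$ is strictly increasing on $[0,\infty)$ and $\kappa$ is a strictly increasing bijection, $\widetilde H=H\circ\kappa$ gives $\widetilde H^{-1}=\kappa^{-1}\circ H^{-1}=\varphi\circ H^{-1}$, hence $H^{-1}=\kappa\circ\widetilde H^{-1}$, i.e. $\overline y(t)=H^{-1}(t)=\kappa\big(\widetilde H^{-1}(t)\big)=\kappa(\overline z(t))$. The same argument with $K$, $\widetilde K$ and the (negative-branch) restriction of $\kappa$ yields $\underline y(t)=K^{-1}(t)=\kappa\big(\widetilde K^{-1}(t)\big)=\kappa(\underline z(t))$, where one has to keep in mind the sign conventions: $\underline y(t)=K^{-1}(t)$ in the notation of Lemma~\ref{lem1} means the value $y$ with $K(y)=t$ for $y\le 0$, and correspondingly $\overline z(t)=\widetilde H^{-1}(t)$, $\underline z(t)=\widetilde K^{-1}(t)$ with the minus sign already absorbed as in the displays preceding the lemma.

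There is essentially no serious obstacle here; the only point requiring a little care is that Lemma~\ref{lem1} distinguishes several cases ($A_1$ versus $A_2$--$A_5$) according to which of the integrals in (\ref{cond1}) converge, and one must check that the convergence/divergence status of $\int_0^\delta dy/b(y)$ is the same as that of $\int_0^{\delta'} dy/\widetilde b(y)$ (and similarly on the negative side) — but this is immediate from the identities $\widetilde H(x)=H((1+\beta)x)$ and $\widetilde K(x)=K((1-\beta)x)$ since $1\pm\beta>0$ by $I_4$, so that $\widetilde H(x)$ is finite exactly when $H$ has a finite limit at $0+$, and likewise for $\widetilde K$. Hence the same case of Lemma~\ref{lem1} applies to both (\ref{zdr}) and (\ref{zdr2}) simultaneously, and in the degenerate branches where an extremal solution is identically zero, the identity $\overline y=\kappa(\overline z)$ (resp. $\underline y=\kappa(\underline z)$) reduces to $0=\kappa(0)$, which holds.
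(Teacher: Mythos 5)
Your proof is correct, but it takes a genuinely different route from the paper. You argue through the explicit representations of Lemma~\ref{lem1}: the substitution $u=\kappa(y)$ gives $\widetilde H=H\circ\kappa$ on $x\ge 0$ and $\widetilde K=K\circ\kappa$ on $x\le 0$ (the same change-of-variables trick the paper uses for $L_\beta$ in the proof of Lemma~\ref{glem2}), and then inversion of strictly monotone functions yields $H^{-1}=\kappa\circ\widetilde H^{-1}$, $K^{-1}=\kappa\circ\widetilde K^{-1}$, hence $\overline y=\kappa(\overline z)$, $\underline y=\kappa(\underline z)$; you also correctly note that the convergence/divergence of the integrals in (\ref{cond1}) is unchanged under the scaling by $1\pm\beta>0$, so problems (\ref{zdr}) and (\ref{zdr2}) fall into the same case $A_1$--$A_5$ and the degenerate zero branches reduce to $\kappa(0)=0$. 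The paper instead works structurally, without the explicit formulas: it shows by a direct computation on the integral equation that $z=\varphi(y)$ solves (\ref{zdr2}) whenever $y$ solves (\ref{zdr}) (using that elements of the funnel $\mathfrak R$ do not change sign), and then uses the strict monotonicity of $\varphi$ together with the definition of the extremal solutions as the supremum/infimum over the funnel to transfer extremality through the bijection $\varphi\leftrightarrow\kappa$. The paper's argument buys uniformity (no case distinction, no sign-convention bookkeeping with $H^{-1}$, $K^{-1}$), while yours buys explicitness and, as a by-product, re-derives the solution formulas for (\ref{zdr2}) and verifies that the case classification is preserved, a point the paper leaves implicit. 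The only care point in your version — which extremal solution corresponds to which inverse and with which sign — you have flagged and resolved consistently, so there is no gap.
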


\begin{proof} Let's prove lemma for the function $\overline{y}(t)$, for the function $\underline{y}(t)$ it be analogously.
Let the function $y(t)$ be any solution of equation (\ref{zdr}).
Consider the function $z(t)=\varphi(y(t)).$ Functions from the set
$\mathfrak R$ don't change their signs, so we get from equation (\ref{zdr})
$$
\begin{aligned}
z(t)&= \frac{y(t)}{1+\beta\text{sgn}
y(t)}=\frac{1}{1+\beta\text{sgn} y(t)}\int^t_0
b(\kappa(\varphi(y(s)))ds=\\
&=\int^t_0\frac{ b(\kappa(\varphi(y(s)))}{ 1+\beta\text{sgn}
\varphi(y(s)) }ds= \int^t_0 \tilde b(z(s))ds.
\end{aligned}
$$
Thus the function $z(t)=\varphi(y(t))$ is a solution of equation
(\ref{zdr2}). The function $\varphi(x)$ is strictly increasing and
we have
$$
z(t)=\varphi(y(t))\leq\varphi(\overline{y}(t))=\overline{z}(t).
$$

{\it Lemma \ref{lem2} is proved.}
\end{proof}

Denote by $\nu_\varepsilon$ measures generated by the processes
$\eta_\varepsilon(\cdot)$ on the space $(C[0,\infty), \mathfrak B).$

{\it Proof of theorem \ref{thbaldi}}. From conditions of the
theorem \ref{thbaldi} it implies that the coefficients of the
process $\eta_\varepsilon(t)$ satisfy conditions \cite[theorem
4.1]{krmah13}. Therefore the measures $\{\nu_\varepsilon\}$ weakly
converge to the measure $\nu$ concentrated on $\overline{z}$ and
$\underline{z}$, i.e. for any continuous bounded functional $F,$
defined on the space $\mathbb{C}[0, \infty)$, the next equality is valid:
$$\lim_{\varepsilon\to
0}\int_{\mathbb{C}[0,\infty)} F(f)\nu_\varepsilon(df)=
\int_{\mathbb{C}[0,\infty)} F(f)\nu(df).
$$
Moreover
\begin{equation}\label{osn2}
\lim_{\varepsilon\to 0}\int_{\mathbb{C}[0,\infty)}
F(f)\nu_\varepsilon(df)=\widetilde{\Gamma} F(\overline{z})+
\big(1-\widetilde{\Gamma}\big)F(\underline{z}),
\end{equation}

The limit measure $\nu$ is given by the right-hand side of equality
(\ref{osn2}); $\overline{z}, \underline{z}$ are extreme solutions of
the problem (\ref{zdr2}), and the value of $\widetilde{\Gamma}$ for
functions $\widetilde{b}(t)$, $\widetilde{\sigma}(t)$ is defined by
\cite[formula (2.11)]{krmah13}.

If we substitute in \cite[formula (2.11)]{krmah13} functions
$\widetilde{b}(t)$, $\widetilde{\sigma}(t)$, we get $\widetilde{\Gamma}=\Gamma$, where value of $\Gamma$ is defined
in (\ref{gamma}).

Further from the definition of a measure generated by the process we have:
$$\mu_\varepsilon\{A\}=\pr\{\xi_\varepsilon(\cdot)\in A\}=
\pr\{\kappa(\eta_\varepsilon(\cdot))\in A\}=
\pr\{\eta_\varepsilon(\cdot)\in
\varphi(A)\}=\nu_\varepsilon\{\varphi(A)\}.$$
From (\ref{osn2}) and
lemma \ref{lem2} we can get
$$\nu(\varphi(A))=\Gamma I_{\{\overline{z}(\cdot)\in \varphi(A)\}}+(1-\Gamma) I_{\{\underline{z}(\cdot)\in \varphi(A)\}}=$$
$$=\Gamma I_{\{\kappa(\overline{z}(\cdot))\in A\}}+(1-\Gamma) I_{\{\kappa(\underline{z}(\cdot))\in A\}}=\Gamma I_{\{\overline{y}(\cdot)\in A\}}+(1-\Gamma) I_{\{\underline{y}(\cdot)\in A\}}=\mu(A),$$
where the measure $\mu$ is defined by the right-hand side of equality (\ref{osn}).

So for any continuous bounded functional $F,$ defined on the space $\mathbb{C}[0, \infty)$, we have
$$\lim_{\varepsilon\to 0}\int_{\mathbb{C}[0,\infty)}F(y)\mu_\varepsilon\{dy\}=
\lim_{\varepsilon\to 0}\int_{\mathbb{C}[0,\infty)}F(y)\nu_\varepsilon\{\varphi(dy)\}=$$
$$=\lim_{\varepsilon\to 0}\int_{\mathbb{C}[0,\infty)}F(\kappa(y)) \nu_\varepsilon\{dy\}=\int_{\mathbb{C}[0,\infty)}F(\kappa(y))\nu\{dy\}=$$
$$=\int_{\mathbb{C}[0,\infty)}F(y)\nu\{\varphi(dy)\}=\int_{\mathbb{C}[0,\infty)}F(y)\mu\{dy\}.$$

{\it Theorem \ref{thbaldi} is proved.}
\vskip 5 pt

{\it Proof of theorem \ref{2}}. Let's consider the cases $A_2$ or $A_4$, cases $A_3$ or $A_5$ one can consider by analogy.  Similarly to the proof of theorem \ref{thbaldi} initially for the process $\eta_\varepsilon(t)$ by \cite[theorem 4.3]{krmah13} (with appropriate modifications) prove convergence of measures generated by the solutions of the equation (\ref{sdrito}) to the measure that is concentrated on the upper extreme solution of the corresponding Cauchy problem (\ref{zdr2}) and then return to the process $\xi_\varepsilon(t)$.

{\it Theorem \ref{2} is proved.}

\vskip 5 pt

\section{Examples.}

\begin{example} Let in the equation (\ref{sdrlch}) coefficients have the form
$$b(x)=\left\{\aligned
x^{\alpha_1}, \quad &  x\geq 0 , \\
 -C|x|^{\alpha_2},\quad & x\leq0 ,
\endaligned  \right.$$
$$\sigma(x)=\left\{\aligned
\sigma_1, \quad &  x\geq 0 , \\
 \sigma_2,\quad & x < 0 ,
\endaligned  \right.$$
with constants $C>0,$ $\sigma_i >0,$ $0<\alpha_i\leq1,$ $i=1,2.$

If $\alpha_1<1, \alpha_2=1,$ then the first integral in (\ref{cond1}) converges, and second one diverges, so we have the case $A_2$ and $\underline{y}(t)=0 $ by the lemma \ref{lem1}. Similarly, if $\alpha_1=1 , \alpha_2<1,$  then $\overline{y}(t)=0$.

If $0<\alpha_1, \alpha_2<1$, then we have the case $A_1$ and
$$L(x)=\left\{\aligned
\frac{x^{\alpha_1+1}}{\sigma_1^2(\alpha_1+1)}, \quad &  x\geq 0, \\
 -\frac{C|x|^{\alpha_2+1}}{\sigma_2^2(\alpha_2+1)}, \quad & x<0 .
\endaligned \right.$$

Then conditions (\ref{glem11}) and (\ref{glem21}) are hold with the constants
$\gamma=0,\; d=\frac{1}{\sigma_1^2(\alpha_1+1)},\;\delta=\alpha_1+1;$ $\;\; \theta=0, k=\frac{C}{\sigma_2^2(\alpha_2+1)},\; \mu=\alpha_2+1,$ which means that the conditions of theorem \ref{thbaldi} take place. We have: \\
1. If $\alpha_1=\alpha_2=\alpha<1$, then
$$ \Gamma=\frac{1}{1+\frac{1-\beta}{1+\beta}\bigg(\frac{C\sigma_1^2}{\sigma_2^2}\bigg)^{\frac{1}{\alpha+1}}}.$$\\
2. If $\alpha_1<\alpha_2\leq 1$, then $\Gamma=1$. \\
3. If $\alpha_2<\alpha_1\leq 1$, then $\Gamma=0$.

From the theorems \ref{thbaldi}, \ref{2} we have that the limit
measure is concentrated with weight $\Gamma$ on the upper extremal
solution and with weight $1-\Gamma$ on the lower extremal solution
of the corresponding Cauchy problem.
\end{example}

\begin{example} Let in the equation (\ref{sdrlch}) coefficient $\sigma(x)$ be such as in the example 1 and the drift coefficient be
$$b(x)=\left\{\aligned
x^\alpha\Big(|\ln x|+1\Big), \quad &  x> 0 , \\
 -|x|^\alpha,\quad & x\leq0 ,
\endaligned \quad 0<\alpha<1. \right.$$

Then the condition $A_1$ and the conditions of the theorem \ref{thbaldi} are hold. Value (\ref{gamma})
can be calculated by the lemma \ref{glem2}, because conditions (\ref{glem11}) and (\ref{glem21}) take place with constants $\gamma=-1, d=\frac{1}{\sigma_1^2(\alpha+1)},$ $\delta=\alpha+1;\;\; \theta=0, k=\frac{1}{\sigma_2^2(\alpha+1)}, \mu=\alpha+1.$

According to the lemma \ref{glem2}, we have $\Gamma=1$ and limit
measure is concentrated on the upper extremal solution of the Cauchy
problem (\ref{zdr}).
\end{example}

\begin{example} Let coefficients of the equation (\ref{sdrlch}) have such form:
$$b(x)=\left\{\aligned
x^\alpha, \quad &  x\geq 0 , \\
 -|x|^\alpha,\quad & x\leq0 ,
\endaligned \qquad 0<\alpha<1. \right.$$

$$\sigma(x)=\left\{\aligned
2-\cos x, \quad &  x\geq 0 , \\
2+\cos x,\quad & x<0 .
\endaligned  \right.$$

In this case, the condition $A_1$ and other conditions of the theorem \ref{thbaldi} are hold, thus there is a weak convergence.

According to the lemma \ref{glem2}
$ \displaystyle\Gamma=\Big(1+\frac{1-\beta}{1+\beta}9^{\frac{1}{\alpha+1}}\Big)^{-1}.$
So limit measure is concentrated with weight $\Gamma$ on the upper extreme solution and with weight $1-\Gamma$ on the lower extremal solution of the corresponding Cauchy problem.
 \end{example}

\end{document}